\renewcommand{\textbf}[1]{\text{\fontseries{b}\selectfont{\upshape #1}}}
\newcommand{\xto}[1]{\xrightarrow{#1}}
\def\C{\mathcal C}
\theoremstyle{definition}
	\newtheorem{thm}{Theorem}[section]
	\newtheorem{lem}[thm]{Lemma}
	\newtheorem{ex}[thm]{Exercise}
	\newtheorem{oss}[thm]{Remark}
	\newtheorem{df}[thm]{Definition}
	\newtheorem{notat}[thm]{Notation}
	\newtheorem*{thm*}{Theorem}
	\newtheorem*{lem*}{Lemma}
	\newtheorem*{prop*}{Proposition}
	\newtheorem*{es*}{Example}
	\newtheorem*{ex*}{Exercise}
	\newtheorem*{oss*}{Remark}
	\newtheorem*{df*}{Definition}
	\newtheorem*{cor*}{Corollary}
	\newtheorem*{notat*}{Notation}
    \newtheorem*{conjec*}{Conjecture}
\newlength{\seplen}
\newlength{\sepwid}
\def\firstblank{\,\rule{\seplen}{\sepwid}\,}
\def\secondblank{\firstblank\llap{\raisebox{2pt}{\firstblank}}}
\DeclareDocumentCommand{\lbkt}{ O{\firstblank} O{\secondblank} }{\langle #1 , #2 |}
\DeclareDocumentCommand{\rbkt}{ O{\firstblank} O{\secondblank} }{| #1 , #2\rangle}
\DeclareDocumentCommand{\llbkt}{ O{\firstblank} O{\secondblank} }{\langle\kern-.2em\langle #1 , #2 |\kern-.125em|}
\DeclareDocumentCommand{\rrbkt}{ O{\firstblank} O{\secondblank} }{|\kern-.125em| #1 , #2\rangle\kern-.2em\rangle}
\renewcommand{\diamond}{\odot}
\renewcommand{\circ}{\cdot}
\newcommand{\tens}{\circledcirc}
\newcommand{\red}[1]{\textcolor{blue!80!white}{(\text{\oldstylenums{#1}})}}
\NewDocumentCommand{\makeabbrev}{mmm}
 {
  \yoruk_makeabbrev:nnn { #1 } { #2 } { #3 }
 }
\makeabbrev{\mathbf}{b#1}{
    b,c,d,e,g,h,i,j,k,l,m,n,o,p,q,r,t,u,v,w,x,y,z,%
    A,B,C,D,E,G,H,I,J,K,L,M,N,O,P,Q,R,S,T,U,V,W,X,Y,Z}
\makeabbrev{\boldsymbol}{bs#1}{%
    a,b,c,d,e,g,h,i,j,k,l,m,n,o,p,q,r,s,t,u,v,w,x,y,z,%
    A,B,C,D,E,G,H,I,J,K,L,M,N,O,P,Q,R,S,T,U,V,W,X,Y,Z}
\makeabbrev{\mathsf}{sf#1}{a,b,c,d,e,f,g,h,i,j,k,l,m,n,o,p,q,r,s,t,u,v,w,x,y,z,%
                           A,B,C,D,E,F,G,H,I,J,K,L,M,N,O,P,Q,R,S,T,U,V,W,X,Y,Z}
\makeabbrev{\mathfrak}{f#1}{a,b,c,d,e,f,g,h,j,k,l,m,n,o,p,q,r,s,t,u,v,w,x,y,z,%
                             A,B,C,D,E,F,G,H,I,J,K,L,M,N,O,P,Q,R,S,T,U,V,W,X,Y,Z}
\makeabbrev{\mathcal}{cl#1}{A,B,C,D,E,F,G,H,I,J,K,L,M,N,O,P,Q,R,S,T,U,V,W,X,Y,Z}
\makeabbrev{\mathbf}{l#1}{A,B,C,D,E,F,G,H,I,J,K,L,M,N,O,P,Q,R,S,T,U,V,W,X,Y,Z}
\makeabbrev{\mathbb}{s#1}{A,B,C,D,E,F,G,H,I,J,K,L,M,N,O,P,Q,R,S,T,U,V,W,X,Y,Z}
\def\thc{\textsc{thc}\@\xspace}
\def\C{\textsf{C}}
\def\Vcat{\mathcal{V}\text{-}\textbf{Cat}}
\def\B{\sfB}
\def\A{\sfA}
\def\S{\sfS}
\def\t{\clT}
\title[a standard theorem on thc]{A standard theorem on\\ adjunctions in two variables}
\author{Fosco Loregian}
  \newcommand{\adjunct}[2]{\nsststile{#2}{#1}}
\def\[{\begin{equation}}
\def\]{\end{equation}}
\def\eqref#1{(\ref{#1})}
\newcommand{\centeredxymatrix}[1]{\vcenter{\xymatrix{#1}}}
\address{
	Fosco \textsc{Loregian}\newline
	Max Planck Institute for Mathematics\newline
	Vivatsgasse 7, 53111 Bonn --- Germany\newline
	\href{mailto:flore@mpim-bonn.mpg.de}{\sf flore@mpim-bonn.mpg.de}
}
\begin{document}
\normalem
\begin{abstract}
We record an explicit proof of the theorem that lifts a two\hyp{}variable adjunction to the arrow categories of its domains.
\end{abstract}
\allowdisplaybreaks
\maketitle
\section{Introduction}
The orthogonality relation between morphisms in a category certainly lies at the core of category theory; the reliance of higher category theory on factorization systems has been for a long time indisputable. 

As already stated in \cite{tderiv,FL0}, the notion of a \emph{factorization system} dates back to the very prehistory of category theory: Mac Lane's \cite{maclane1948groups} was able to devise what we would now call a factorization system on the category $\mathsf{Grp}$ of groups; it is nevertheless only in \cite{FK} that a perspicuous understanding of the definition was reached, together with a demonstration of its ubiquity. 

Just a few years before, algebraic topology was about to be reborn: Quillen's \cite{Qui} devised the notion of \emph{model category} to attribute a ``homotopy theory'' to many algebraic and combinatorial structures. The systematic theory of such structures is now called \emph{homotopical algebra}, and somehow realizes the homotopy theorists' dream (see the introduction of \cite{Baues1989}) to reach an axiomatic theory of categories whose syntax allows to speak about homotopy theory.

The entire note will be devoted to prove the following result:
\begin{lem}[\protect{\cite{Hirschhorn2003}}, 9.3.6]
Let $\clM$ be a simplicial model category. If $A\to B$ and $X\to Y$ are maps in $\clM$, and $L\to K$ is a map of simplicial sets, then the following conditions are equivalent:
\begin{itemize}
\item The dotted arrow exists in every solid arrow diagram of the form
\[
\centeredxymatrix{
L\ar[r] \ar[d]_u & \ar[d] \underline{\hom}(B,X)\\
K\ar[r] \ar@{.>}[ur] & \underline{\hom}(A,X)\underset{\underline{\hom}(B,Y)}\times\underline{\hom}(A,Y)
}\]
in the category $\textbf{sSet}$ of simplicial sets.
\item The dotted arrow exists in every solid arrow diagram of the form
\[
\centeredxymatrix{
A\ar[r]\ar[d]_f & \ar[d] X^K\\
B\ar[r] \ar@{.>}[ur] & X^L \times_{Y^L} Y^K
}\]
in the category $\clM$.
\item The dotted arrow exists in every solid arrow diagram of the form
\[
\centeredxymatrix{
A\otimes K \amalg_{A\otimes L} B\otimes L\ar[d]\ar[r] & X\ar[d] \\
B\otimes K \ar[r]\ar@{.>}[ur]& Y
}
\]
in the category $\clM$.
\end{itemize}
\end{lem}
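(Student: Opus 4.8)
The plan is to deduce all three equivalences from the single structural input available, namely the transposition isomorphisms of the simplicial two\hyp{}variable adjunction
\begin{equation*}
\clM(A\otimes K,\,X)\;\cong\;\mathbf{sSet}\bigl(K,\underline{\hom}(A,X)\bigr)\;\cong\;\clM\bigl(A,\,X^{K}\bigr),
\end{equation*}
natural in $A$, $K$ and $X$. I will treat each of the three displayed diagrams simply as a lifting problem between a fixed pair of maps, so that the statement becomes the assertion that these three lifting problems carry the same solutions.

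First I would unwind the third diagram. By the universal property of the pushout $A\otimes K\amalg_{A\otimes L}B\otimes L$, a solid square of the third form is exactly a triple $a\colon A\otimes K\to X$, $b\colon B\otimes L\to X$, $c\colon B\otimes K\to Y$ in which $a$ and $b$ agree after restriction to $A\otimes L$ and in which the composites into $Y$ determined by $g$ and by $c$ coincide; a filler is a single map $d\colon B\otimes K\to X$ restricting to $a$ and $b$ and lying over $c$. Transposing $a$ and $d$ through $\clM(-\otimes K,-)\cong\clM(-,(-)^{K})$, and $b$ through its $L$\hyp{}analogue, reorganizes the constraints: compatibility of $b$ with $c$ becomes the statement that the transposes $\bar b$ and $\bar c$ assemble into a single map $B\to X^{L}\times_{Y^{L}}Y^{K}$, while the agreement of $a$ with $b$ together with the compatibility of $a$ with $c$ becomes exactly the commutativity of the square formed by $\bar a\colon A\to X^{K}$, the map $f$, and the corner map $X^{K}\to X^{L}\times_{Y^{L}}Y^{K}$. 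This is precisely a solid square of the second form, and $\bar d$ is precisely a filler for it. Since transposition is a bijection natural in all variables, it matches squares with squares and fillers with fillers, so the second and third conditions are equivalent.

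The first condition I would handle identically, transposing instead along the other leg $\clM(A\otimes K,X)\cong\mathbf{sSet}\bigl(K,\underline{\hom}(A,X)\bigr)$: the same triple $a,b,c$ now reorganizes into a solid square with $L\to K$ on the left and the pullback\hyp{}hom of $f$ and $g$ on the right, once more with fillers corresponding to fillers. Taken together, the two computations are nothing but the observation that the pushout\hyp{}product, the pullback\hyp{}cotensor and the pullback\hyp{}hom are the three functors of a two\hyp{}variable adjunction on the arrow categories of $\clM$ and $\mathbf{sSet}$, and the equivalence of the three lifting properties is the shadow of that adjunction at the level of diagonal fillers.

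I expect the only genuine difficulty to be bookkeeping rather than ideas. One must check that each transposition isomorphism is compatible with the structural maps that build the corner objects --- the coprojections into the pushout and the projections out of the two pullbacks --- so that a commuting square is carried to a commuting square and a diagonal to a diagonal. Concretely this amounts to verifying that the naturality squares for the adjunction in the three separate variables $f$, $u=(L\to K)$ and $g$ fit together correctly once the variances are pinned down; this is a routine but slightly lengthy diagram chase, and it is exactly the content that the note sets out to record explicitly.
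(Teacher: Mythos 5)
Your proposal is correct and follows essentially the same route as the paper: it reduces the lemma to the statement that the pushout-product, pullback-cotensor and pullback-hom form a two-variable adjunction on arrow categories, with squares and diagonal fillers matched by transposition (mates) in each variable --- precisely the paper's main theorem together with its subsequent remark that the bijections between lifting problems restrict to bijections between their solutions. The checks you defer as bookkeeping (compatibility of transposition with the pushout coprojections and pullback projections, the wedge/dinaturality conditions in the simplicial variable, and joint epicity/monicity of the corner maps) are exactly the diagram chases the note carries out in full.
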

 This result is stated as a direct consequence of \cite[Definition 9.1.6]{Hirschhorn2003}: a non-expert eye might believe they could get away quite easily with its proof.

This is only partly true. Altough there is nothing difficult in the proof other than plain old category theory, yet it is precisely this flavour of category theory that is needed while playing with model structures. In the present note we embark in a full solution of this exercise. 
\section{The lifting theorem}
\begin{df}[\textsc{thc} situation]\label{tiaccaci}
Let $\S,\A,\B$ be three categories; a \thc situation is a triple $\t$ of functors
\[
\tens : \A\times \S\to \B \qquad\qquad
\lbkt : \B^\text{op}\times \S \to \A\qquad\qquad
\rbkt : \A^\text{op}\times \B\to \S
\]
related by the following isomorphisms, natural in each component $A\in \A, B\in\B, S\in\S$:
\[
\B(A\tens S, B)\cong \A(A, \lbkt[B][S])\cong \S(S, \rbkt[A][B])
\]
\end{df}
\begin{oss}
In more modern literature (see again textbooks pn model categories, \cite{Hov,Hirschhorn2003} or \cite{Cheng2014}) \thc situations are called \emph{adjunctions of two variables}. The name \emph{\thc situation} dates back to \cite{Graya}, motivated by the following ``standard'' example of \thc situation.
\end{oss}
\begin{ex}[The standard \thc situation]
Let $\C \in\Vcat$ be a category enriched on the monoidal category $\clV$; a tensored and cotensored (see \cite{Kelly2005}) structure on $\C$ put this category in the \emph{standard} \thc situation, defined by the tensor $(V,C)\mapsto C\tens V$, cotensor $(V,C)\mapsto C^V$ and internal hom $(C,C')\mapsto \C(C,C')$ functors. 
\end{ex}
\begin{notat}
We will use round brackets to denote the co/wedge components of such a co/unit: so given a \thc situation $\t$ on $\A,\B,\S$, the unit $\eta_{A,(K)}: A\to \lbkt[K][A\tens K]$ of the adjunction $\firstblank\tens K\dashv \lbkt[K][\firstblank]$ is natural in $A$ and a wedge in $K\in\S$; dually, the counit $\varepsilon_{Y,(K)} : \lbkt[K][Y]\tens K\to Y$ of the same adjunction is natural in $Y$ and a cowedge in $K\in\S$, and similarly for all the other co/units and adjunctions.
\end{notat}
\begin{df}[Leibniz operations on arrows]\label{leibniz}
Let $\S,\A,\B$ be three categories, and $\t = \{\tens,\lbkt,\rbkt\}$ a \thc situation in the same notation of \autoref{tiaccaci}. Assume $\B$ has finite colimits, and $\A, \S$ admit finite limits.

We define the \emph{Leibniz operations} as the three functors
\[
\A^{[1]}\times \S^{[1]}\xto{\firstblank\odot\secondblank} \B^{[1]} \qquad
 (\B^{[1]})^\text{op}\times \S^{[1]} \xto{\llbkt} \A^{[1]}\qquad
 (\A^{[1]})^\text{op}\times \B^{[1]}\xto{\rrbkt} \S^{[1]}
\]
where given $f: A\to B$, $g: X\to Y$ and $u: L\to K$ in $\S$ the arrows $f\diamond u$, $\llbkt[u][g]$ and $\rrbkt[f][g]$ are defined, respectively, to be the dotted arrows in the following diagrams
\begin{center}
\includegraphics[width=\textwidth]{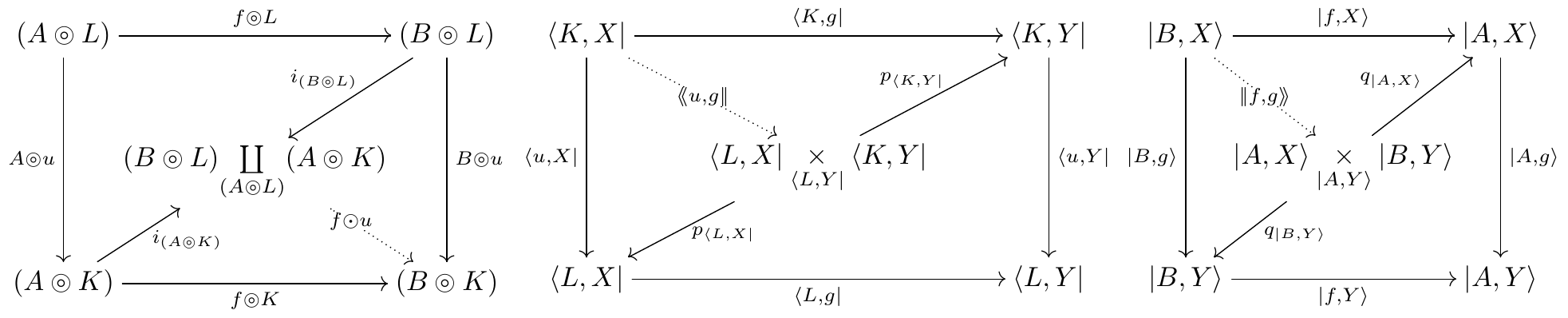}
\end{center}
\end{df}
\begin{thm}[The main theorem]\label{main}
Let $\S,\A,\B$ be three categories, and $\t$ a \thc situation. In the same notation of \autoref{tiaccaci}, the Leibniz operations of \autoref{leibniz} form another \thc situation (that we denote $\t^{[1]}$) on the functor categories $\S^{[1]},\A^{[1]},\B^{[1]}$; in other words, there are isomorphisms (natural in each component with respect to morphisms in the functor category)
\[
\B^{[1]}(f\diamond u, g)\overset{(\star)}\cong \A^{[1]}(f, \llbkt[u][g])\overset{(\star\star)}\cong \S^{[1]}(u, \rrbkt[f][g])
\]
induced by the functors $\firstblank\tens\secondblank, \lbkt, \rbkt$ of $\t$
\end{thm}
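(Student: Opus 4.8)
The plan is to prove both isomorphisms by the same mechanism: reduce a morphism of arrow categories to its \emph{corner data} and then transpose that data across the two adjunction isomorphisms packaged into $\t$. The only inputs I need are the elementary description of a functor category $\clC^{[1]}$ — a morphism $p\to q$ is a commuting square, that is, a compatible pair of arrows between the domains and between the codomains — together with the universal properties of the pushout defining the domain of $f\diamond u$ and of the pullbacks defining the codomains of $\llbkt[u][g]$ and $\rrbkt[f][g]$, which exist by the standing hypotheses that $\B$ has finite colimits and $\A,\S$ finite limits.

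For $(\star)$ I would begin by unfolding a morphism $f\diamond u\to g$ in $\B^{[1]}$. Writing $P=A\tens K\amalg_{A\tens L}B\tens L$ for the domain of $f\diamond u$, such a morphism is a commuting square whose top edge is an arrow $P\to X$ and whose bottom edge is an arrow $B\tens K\to Y$. By the universal property of $P$, giving the top edge is the same as giving a pair $\phi_1\colon A\tens K\to X$ and $\phi_2\colon B\tens L\to X$ agreeing after restriction along $A\tens L$; precomposing the square with the two coprojections of $P$ then shows that the entire datum amounts to arrows $\phi_1$, $\phi_2$ and $\psi\colon B\tens K\to Y$ subject to three compatibility equations. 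Transposing $\phi_1,\phi_2,\psi$ across the isomorphism $\B(A'\tens K',X')\cong\A(A',\lbkt[X'][K'])$ produces arrows $A\to\lbkt[X][K]$, $B\to\lbkt[X][L]$ and $B\to\lbkt[Y][K]$; by naturality of this isomorphism in all three variables the three equations turn precisely into the data of a morphism $f\to\llbkt[u][g]$ in $\A^{[1]}$, namely the assertion that the two arrows out of $B$ glue to a single arrow $B\to\lbkt[X][L]\times_{\lbkt[Y][L]}\lbkt[Y][K]$ into the pullback and that the square with left edge $f$ and right edge $\llbkt[u][g]$ commutes. Since transposition is a bijection, $(\star)$ follows. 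The isomorphism $(\star\star)$ is obtained by the identical manoeuvre, transposing the same corner data $\phi_1,\phi_2,\psi$ across $\B(A'\tens K',X')\cong\S(K',\rbkt[A'][X'])$ instead and reading off a morphism $u\to\rrbkt[f][g]$, whose codomain is the pullback $\rbkt[A][X]\times_{\rbkt[A][Y]}\rbkt[B][Y]$.

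I expect the main obstacle to be naturality rather than the bijection itself. Each of $(\star),(\star\star)$ must be natural in $f$, $u$ and $g$ at once, and this does not drop out of the pointwise isomorphisms alone: the correspondence also runs through the universal properties of $P$ and of the two pullbacks, so one must check that forming these (co)limits is compatible with transposition. Concretely, this is where the wedge and cowedge coherences of the co/units recorded in the Notation enter, since functoriality of $P$ in the square $(f,u)$ and of the pullbacks in $(u,g)$ has to interlock with the naturality squares of the adjunction isomorphisms. The verification is routine but long; the discipline that keeps it manageable is to transpose each compatibility equation exactly once and to track carefully the (mixed) variances of $\lbkt$ and $\rbkt$.
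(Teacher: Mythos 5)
Your proof is correct, and at its core it implements the same correspondence as the paper --- both send a square $(a,b)\colon f\diamond u\to g$ to the mates of the three corner arrows $a\circ i_{A\tens K}$, $a\circ i_{B\tens L}$ and $b$ --- but your verification is organized genuinely differently, and more economically. The paper constructs the forward map $(a,b)\mapsto(\hat a,\hat b)$ explicitly, proves that the resulting square commutes by a long equation chain after post-composing with the jointly monic projections of $Q$, then constructs an explicit inverse $(x,y)\mapsto(\tilde x,\tilde y)$ with the dual verification via the jointly epic coprojections of $P$, and finally checks both round trips using the triangle identities of the adjunctions. You instead flatten both hom-sets simultaneously: a morphism $f\diamond u\to g$ \emph{is} a triple $(\phi_1,\phi_2,\psi)$ subject to three equations (one pushout-gluing, two corner equations from the square), a morphism $f\to\llbkt[u][g]$ \emph{is} a triple subject to three equations (one pullback-gluing, two corner equations), and componentwise transposition carries the one system of equations bijectively onto the other. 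Since each component of the transposition is already a bijection, well-definedness, injectivity and surjectivity arrive at once, and the triangle identities never appear: they are absorbed into the fact that each datum is transposed exactly once, never back and forth. This is what your approach buys --- it eliminates the inverse construction and the round-trip checks entirely, roughly halving the verification. One substantive point you should make explicit when writing it up: the matching of equations is crosswise, not slot-by-slot --- the pushout-gluing equation $\phi_1\circ(A\tens u)=\phi_2\circ(f\tens L)$ transposes to one of the corner equations for $f\to\llbkt[u][g]$, while one of the corner equations $g\circ\phi_2=\psi\circ(B\tens u)$ transposes to the pullback-gluing equation for $Q$; both of these steps use precisely the wedge/dinaturality coherence of the unit in the $\S$-variable that you flag, so that coherence is doing work already in the bijection, not only in naturality. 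On naturality itself your assessment is fair: it requires interlocking the functoriality of $P$, $Q$, $R$ with the naturality of the transpositions, and (for what it is worth) the paper also asserts this step without detailed proof, so you are not behind it there. Your sketch of $(\star\star)$ by re-transposing the same corner data across $\rbkt$ instead of $\lbkt$ is likewise correct and matches the paper's remark that $(\star\star)$ follows by exchanging the r\^oles of $f$ and $u$.
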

\begin{proof}
The proof of this statement occupies the rest of the section. As the reader will soon notice, the proof is elementary but it takes a certain effort to fill in all the details; we concentrate on the proof of isomorphism $(\star)$, conscious that the proof of $(\star\star)$ is similar in each part.
\begin{notat*}
It is of vital importance to establish good notation. We will then constantly refer to the diagrams above, defining the triple of functors $\{\diamond, \llbkt,\rrbkt\}$, without further mention, and we hereby choose the following shorthand to denote the co/limits appearing in those diagrams:
\[
P \coloneqq  (B\tens L) \coprod_{(A\tens L)} (A\tens K) ; \qquad
Q \coloneqq  \lbkt[L][X]\underset{\lbkt[L][Y]}\times \lbkt[K][Y]  ; \qquad
R \coloneqq \rbkt[A][X] \underset{\rbkt[A][Y]}\times \rbkt[B][Y]
\]
\end{notat*}
We start by defining the bijection $(\star)$; a generic morphism $(a,b): f\diamond u\to g$ in $\B$ is represented by a commutative square like
\[
\centeredxymatrix{
	P \ar[d]_{f\diamond u} \ar[r]^a & X \ar[d]^g \\
	B \tens K \ar[r]_b & Y
}
\]
so that a natural candidate for an arrow $\B^{[1]}(f\diamond u, g)\cong \A^{[1]}(f, \llbkt[u][g])$ is the following correspondence: the pair $(a,b)$ goes to the pair $(\hat a,\hat b)$ where 
\begin{itemize}
\item The arrow $\hat a$ is the \emph{mate} of the arrow $A\tens K \xto{i_{A\tens K}} P \xto{a} X$ under the adjunction $\firstblank\tens K \dashv \llbkt[K][\firstblank]$, i.e.
\[
\xymatrix@C=2cm{
	A \ar[r]_{\eta_{A,(K)}}& \llbkt[K][A\tens K] \ar[r]_{\llbkt[K][a\circ i_{A\tens K}]} & \llbkt[K][X]
}
\]
\item The arrow $\hat b$ is obtained via the universal property of $Q$, i.e\@. via the following diagram:
\[
\centeredxymatrix{
	B \ar@/_1pc/[ddr]_{b_{\lbkt[K][Y]}}\ar@/^1pc/[drr]^{b_{\lbkt[L][X]}}\ar@{.>}[dr]|{\hat b}& \\
	& Q \ar[r]^{p_{\lbkt[L][X]}}\ar[d]^{p_{\lbkt[K][Y]}}& \lbkt[L][X]\ar[d]^{\lbkt[L][g]} \\
	& \lbkt[K][Y] \ar[r]_{\lbkt[u][Y]} & \lbkt[L][Y]
}
\]
where $b_{\lbkt[L][X]}$ is $B \xto{\eta_{B, (L)}} (B\tens L)^L \xto{(a\circ i_{B\tens L})^L} \lbkt[L][X]$ and $b_{\lbkt[K][Y]}$ is the mate of $b$, $B \xto{\eta_{B, (K)}} (B\tens K)^K \xto{\lbkt[K][b]} \lbkt[K][Y]$.
\end{itemize}
Now we have to prove that this pair really defines a morphism $f\to \llbkt[u][g]$; to this end, we have to exploit the universal property of pullback, showing that the square
\[
\centeredxymatrix{
	A \ar[r]^{\hat a}\ar[d]_f & \lbkt[K][X] \ar[d]^{\llbkt[u][g]}\\
	B \ar[r]_{\hat b} & Q
}
\]
is commutative by pre-pending it with the projections $p_{\lbkt[L][X]}, p_{\lbkt[K][Y]}: Q\rightrightarrows \lbkt[L][X], \lbkt[K][Y]$ and showing that 
\[
\begin{cases}
p_{\lbkt[L][X]} \circ \llbkt[u][g] \circ \hat a = p_{\lbkt[L][X]} \circ \hat b\circ f \\
p_{\lbkt[K][Y]} \circ \llbkt[u][g] \circ \hat a = p_{\lbkt[K][Y]} \circ \hat b\circ f
\end{cases}
\]
The commutativity of the square will follow from the joint monicity of the pair $(p_{\lbkt[L][X]}, p_{\lbkt[K][Y]})$.

To prove this last statement, we use the chain of equations
\begin{align*} 
p_{\lbkt[L][X]} \circ \llbkt[u][g] \circ \hat a &= \underline{\lbkt[u][X] \circ \lbkt[K][a]}_{\red{1}}\circ \lbkt[K][i_{A\tens K}] \circ \eta_{A, (K)}\\
&= \lbkt[L][a] \circ\underline{\lbkt[u][P] \circ \lbkt[K][i_{A\tens K}]}_{\red{2}} \circ\eta_{A, (K)}\\
&=\lbkt[L][a] \circ \lbkt[L][i_{A\tens K}] \circ\underline{(A\tens K)^u \circ\eta_{A, (K)}}_{\red{3}}\\
&=\lbkt[L][a] \circ\underline{\lbkt[L][i_{A\tens K}] \circ \lbkt[L][A\tens u]}_{\red{4}} \circ\eta_{A,(L)}\\
&= \lbkt[L][a] \circ \lbkt[L][i_{B\tens L}] \circ\underline{\lbkt[L][f\tens L] \circ\eta_{A,(L)}}_{\red{5}}\\
&=\underline{\lbkt[L][a_1] \circ \eta_{B,(L)}} \circ f =b_{\lbkt[L][X]}\circ f = p_{\lbkt[L][X]}\circ \hat b\circ f; \\
p_{\lbkt[K][Y]}\circ \llbkt[u][g]\circ \hat a &= \lbkt[K][g] \circ \hat a\\
&= \lbkt[K][g] \circ \lbkt[K][a] \circ \lbkt[K][i_{A\tens K}] \circ \eta_{A,(K)}\\
&= \lbkt[K][b] \circ \underline{\lbkt[K][f\diamond u] \circ \lbkt[K][i_{A\tens K}]} \circ \eta_{A, (K)}\\
&= \lbkt[K][b] \circ \underline{\lbkt[K][f\tens K] \circ \eta_{A,(K)}}_{(5')}\\
&= \lbkt[K][b] \circ \eta_{B,(K)} \circ f\\
&= b_{\lbkt[K][Y]}\circ f = p_{\lbkt[K][Y]}\circ \hat b\circ f\\
\end{align*}
justified by the following commutative squares, all obtained from naturality and co/wedge conditions of the co/units of the starting \thc situation:
\begin{center}
\includegraphics[width=\textwidth]{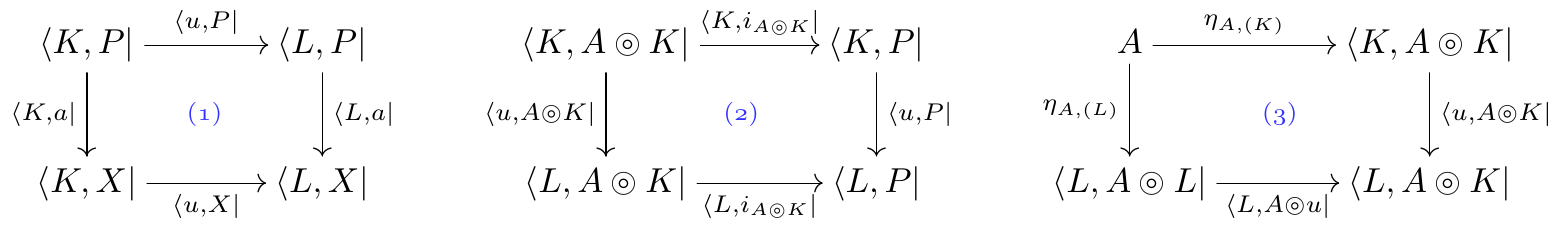}\\
\includegraphics[width=\textwidth]{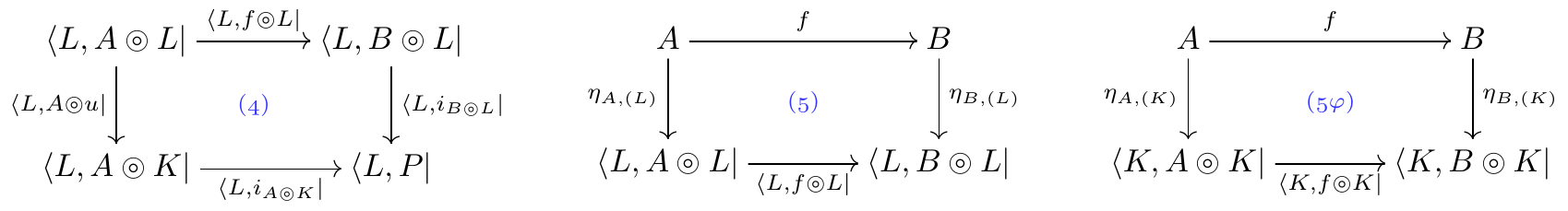}
\end{center}
This paves the way to a definition for an inverse of $\varphi$: let $(x,y): f\to \llbkt[u][g]$ be a morphism in $\A$, encoded in a commutative square
\[
\centeredxymatrix{
	A \ar[r]^-x\ar[d]_f & \lbkt[K][X] \ar[d]^{\llbkt[u][g]}\\
	B \ar[r]_-y & Q
}
\]
We define
\begin{itemize}
\item $\tilde x$ to be the arrow obtained via the universal property o $P$, from the diagram
\[
\centeredxymatrix{
	A\tens L \ar[r]^{A\tens u}\ar[d]_{f\tens L} & A\tens K \ar[d]_{i_{A\tens K}} \ar@/^1pc/[ddr]^{x_{A\tens K}} \\
	B\tens L  \ar@/_1pc/[drr]_{x_{B\tens L}} \ar[r]_{i_{B\tens L}} & P \ar@{.>}[dr]|{\tilde x} \\
	&& X
}
\]
where $(x_{A\tens K}, x_{B\tens L})$ is the pair of arrows 
\begin{align*}
x_{A\tens K} &= \epsilon_{X,(K)}\circ (x\tens K): A\tens K\to \lbkt[K][X]\tens K \to X\\
x_{B\tens L} &= \epsilon_{X, (L)}\circ p_{\lbkt[L][X]}\circ y\tens L: B\tens L \to \lbkt[L][X] \tens L \to X
\end{align*}
\item $\tilde y$ to be the mate of $p_{\lbkt[K][Y]}\circ y$ under the adjunction $\firstblank\tens K \dashv (-)^K$, i.e\@. the arrow $B\tens K \xto{p_{\lbkt[K][Y]}\circ y \tens K} \lbkt[K][Y]\tens K \xto{\epsilon_{Y, (K)}} Y$.
\end{itemize}
We show now that the pair $(\tilde x,\tilde y)$ defines a morphism $f\diamond u\to g$, i.e\@. that the square
\[
\centeredxymatrix{
	P \ar[d]_{f\diamond u} \ar[r]^{\tilde x} & X \ar[d]^g \\
	B \tens K \ar[r]_{\tilde y} & Y
}
\]
is commutative; the same reasoning above, suitably dualized, applies: we can exploit the joint epicity of the pushout inclusions $i_{A\tens K}: A\tens K\to P$ and $i_{B\tens L}: B\tens L\to P$ to prove that
\[
\begin{cases}
g \circ \tilde x \circ i_{B\tens L} = \tilde y \circ (f\diamond u)\circ i_{B\tens L}\\
g \circ \tilde x \circ i_{A\tens K} = \tilde y \circ (f\diamond u)\circ i_{A\tens K}.
\end{cases}
\]
Now, we use a similar chain of equations,
\begin{align*}
g\circ \tilde x \circ i_{B\tens L} &= \underline{g\circ \epsilon_{X, (L)}}_{\red{1}}\circ (p_{\lbkt[L][X]}\circ (y\tens L))\\
& = \epsilon_{Y,(L)}\circ (\underline{\lbkt[L][g] p_{\lbkt[L][X]}}_{\red{2}}\circ(y\tens L))\\
& = \underline{\epsilon_{Y,(L)}\circ (\lbkt[u][Y] \tens L)}_{\red{3}} \circ (p_{\lbkt[K][Y]}\circ(y\tens L))\\
& = \epsilon_{Y,(K)} \circ \underline{(\lbkt[K][Y]\tens u)\circ (p_{\lbkt[K][Y]}\circ(y\tens L))}_{\red{4}}\\
& = \underline{\epsilon_{Y,(K)} \circ (p_{\lbkt[K][Y]}\circ (y\tens K))}\circ \dotuline{(B\tens u)}\\
& = \underline{\tilde y}\circ \dotuline{(f\diamond u)\circ i_{B\tens L}}\\
g\circ \tilde x \circ i_{A\tens K} &= \underline{g\circ \epsilon_{X,(K)}}_{(1')}\circ (X\tens K)\\
 &= \epsilon_{Y,(K)}\circ \underline{(\lbkt[K][g]\tens K)} \circ (x\tens K) \\
 &= \epsilon_{Y,(K)}\circ (p_{\lbkt[K][Y]}\circ \underline{\llbkt[u][g])\tens K \circ (x\tens K)}\\
 &= \epsilon_{Y,(K)}\circ (p_{\lbkt[K][Y]}\circ (yf\tens K))\\
 &= \underline{\epsilon_{Y,(K)}\circ (p_{\lbkt[K][Y]}\circ (y\tens K))}\circ \dotuline{f\tens K}\\
 &= \underline{\tilde y}\circ \dotuline{(f\diamond u)\circ i_{A\tens K}}
\end{align*}
justified by the following commutative squares, all obtained from the definitions and naturality of the co/units of the starting \thc situation:
\begin{center}
\includegraphics[width=\textwidth]{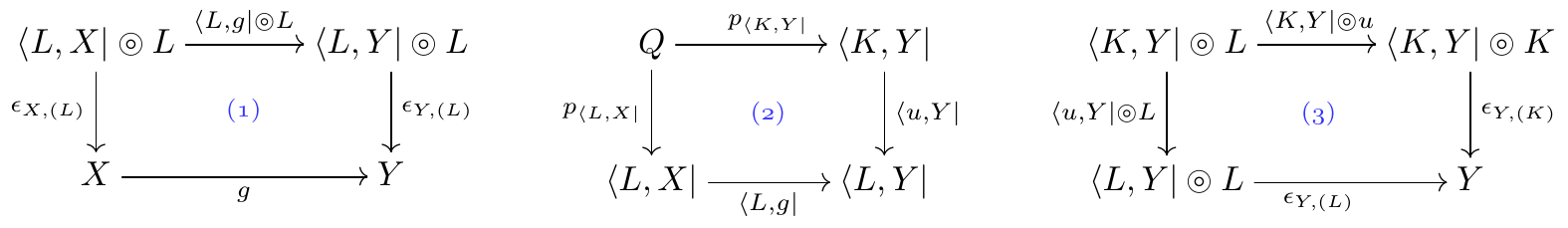}\\
\includegraphics{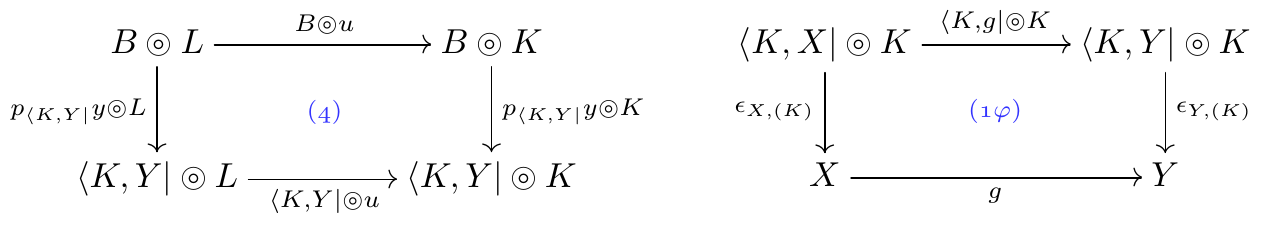}
\end{center}
It remains only to prove, now, that these two correspondences are mutually inverse: this exploits the triangle identities of the adjunctions in the \thc situation: in the above notation, starting from $(a,b): f\diamond u \to g$ we know that
\begin{itemize}
\item $\tilde{\hat a}$ is the arrow induced by the pair $(\epsilon_{X,(K)}\circ (\hat a\tens K),\epsilon_{X,(K)}\circ (p_{\lbkt[L][X]}\hat b\tens L))$: we can compute
\begin{align*}
\epsilon_{X,(K)}\circ (\hat a\tens K)   &= \epsilon_{X,(K)}\circ \big((\lbkt[K][a]\circ i_{A\tens K}^K) \tens K\big)\circ \eta_{A,(K)}\tens K \\
& = a \circ i_{A\tens K}\circ \underline{\epsilon_{A\tens K, (K)}\circ (\eta_{A, (K)}\tens K)}\\
& = a \circ i_{A\tens K};\\
\epsilon_{X,(K)}\circ (p_{\lbkt[L][X]}\hat b\tens L) &= \epsilon_{Y,(K)}\circ \big( (a\circ i_{B\tens L})^L\circ \eta_{B,(L)}\big)\tens L \\
& = a\circ i_{B\tens L}\circ \underline{\epsilon_{B\tens L, (L)}\circ \eta_{B,(L)}\tens L}\\
& = a\circ i_{B\tens L}.
\end{align*}
and we conclude, since $a$ is (tautologically) induced by the pair $(a \circ i_{A\tens K},a\circ i_{B\tens L})$.
\item $\tilde{\hat b}$ is the mate of $p_{\lbkt[K][Y]}\circ \hat b = \lbkt[K][b]\circ \eta_{B,(K)}$: but now
\begin{align*}
\tilde{\hat b} &= \epsilon_{Y, (K)}\circ (\lbkt[K][b]\circ \eta_{B,(K)})\tens K\\
&= b\circ \underline{\epsilon_{B\tens K, (K)}\circ (\eta_{B,(K)}\tens K)}\\
&= b.
\end{align*}
\end{itemize}
The other composition exploits in a similar way the triangle identities of an adjunction, and concludes the proof of the fact that 
\[
\B^{[1]}(f\diamond u, g) \cong \A^{[1]}(f, \llbkt[u][g]).
\]
A similar argument now shows the isomorphism
\[
\B^{[1]}(f\diamond u, g) \cong \A^{[1]}(u, \rrbkt[f][g]),
\]
and all these passages are natural in their arguments, thus showing the existence of the desired \thc situation.\footnote{Until now, we insisted in a certain pedantry because we find that this is useful to the novice, approaching the subject from outside pure category theory, or the one looking for a complete argument; now we feel free to relax, as the proof of the fact that $\B^{[1]}(f\diamond u, g) \cong \A^{[1]}(u, \rrbkt[f][g])$ would go exactly the same way, exploiting exactly the same technique, simply exchanging the r\^ole of $f$ and $u$, and the r\^ole of $\llbkt$ with $\rrbkt$.}
\end{proof}
It is worth to notice that the adjunction isomorphisms are compatible with the orthogonality relation, in the sense made precise by the following statement:
\begin{oss}
Let $\t$ be a \thc situation, and $\t^{[1]}$ the induced \thc following the notation of \autoref{main}. Then the bijections
\[
\B^{[1]}(f\diamond u, g) \cong \A^{[1]}(f, \llbkt[u][g])
\cong \A^{[1]}(u, \rrbkt[f][g])
\] (``bijections between lifting problems'') restrict to bijections between the squares that admit an orthogonal, or weakly orthogonal, lifting (``bijections between solutions of lifting problems''). More precisely, in the following commutative diagrams
\[
\centeredxymatrix{
P\ar[r]^{a}\ar[d]_{f\diamond g} & L\ar[d]^u\\
B\tens K\ar[r]_{b} \ar@{.>}[ur]_{\alpha}& K 
}
\qquad
\centeredxymatrix{
A\ar[r]^{\hat a}\ar[d]_f & \ar[d]^{\llbkt[u][g]}\lbkt[K][X]\\
B\ar[r]_{\hat b} \ar@{.>}[ur]|{\hat \alpha}&  Q
}
\qquad
\centeredxymatrix{
L\ar[r]^{\tilde a}\ar[d]_u & \ar[d]^{\rrbkt[f][g]}\rbkt[B][X]\\
K\ar[r]_{\tilde b} \ar@{.>}[ur]|{\tilde \alpha}& R
}\label{liftiproble} 
\] if a solution $\alpha$ exists in any one of the three lifting problems, splitting the square into two commutative triangles, then it exists in the other two, and this solution is precisely the \emph{mate} of $\alpha$ under the adjunction maps generated by $\t$. 
\end{oss}
At first glance, this is similar to, and generalizing, the fact that if $F\adjunct{\eta}{\epsilon} G$ is a pair of adjoint functors, then $F(f)\boxslash g$ in the domain of $G$ if and only if $f \boxslash G(g)$ in the domain of $F$: lifting problems exchange each other under the adjunction isomorphism, in the sense that the left lifting problem in the diagram below,
\[
\centeredxymatrix{
FA \ar[r]\ar[d]& X \ar[d]\\
FB \ar@{.>}[ur]_\alpha \ar[r]& Y
}\qquad\qquad
\centeredxymatrix{
A \ar[r]\ar[d]& GX \ar[d]\\
B \ar@{.>}[ur]_{\hat\alpha} \ar[r]& GY
}
\] 
can be solved by $\alpha : FB\to Y$ precisely if the lifting problem on the right can be solved by $\hat\alpha = G\alpha\circ \eta_B$. However, there is no chance to reduce ($\star$) above to this remark, as the morphism $f\diamond u$ is not the result of the action of a functor $\firstblank\diamond u$ on $f$ (it is, instead, the function on \emph{objects} of such a functor $\A^{[1]}\times \S^{[1]}\xto{\firstblank\odot u} \B^{[1]}$).

Hence, we are really left with the statement to prove, essentially from scratch; the argument will not be really different from what the reader can imagine, so we concede ourselves to be again slightly sketchy.

Consider the diagram
\[{\small
\begin{tikzcd}
A \arrow[ddd, "f"'] \arrow[r, "{\eta_{A,(K)}}"] & \langle K,A\tens K| \arrow[r, "{\langle K, i_{A\tens K}|}"] \arrow[dd, "{\langle K, f\tens K|}"'] & \lbkt[K][P] \arrow[r, "{\lbkt[K][a]}"] \arrow[d, "{\langle K, f\odot u|}"'] & \lbkt[K][X] \arrow[ddd] \arrow[rrrddd, "{\llbkt[u][g]}", bend left] &  &  &  \\
 &  & \langle K, B\tens K| \arrow[ru, "{\langle K,\alpha|}" description] &  &  &  &  \\
 & \langle K,B\tens K| \arrow[ru, equal, "{\text{id}_{\langle K,B\tens K|}}"'] &  &  &  &  &  \\
B \arrow[ru, "{\eta_{B,(K)}}"'] \arrow[rrr, "\hat b"] \arrow[d, "{\eta_{B,(K)}}"'] &  &  & Q \arrow[rrr, "{p_{\lbkt[L][X]}}"] \arrow[d, "{p_{\lbkt[K][Y]}}"] &  &  & \lbkt[L][X] \arrow[d, "{\langle L, g|}"] \\[1cm]
\langle K, B\tens K| \arrow[rrr, "{\lbkt[K][b]}"'] &  &  & \lbkt[K][Y] \arrow[rrr, "{\langle u, Y|}"'] &  &  & \lbkt[L][Y]
\end{tikzcd}}
\notag
\] 
the north-west upper triangle commutes because it can be split in three subdiagrams, each of which commutes, giving the chain of equalities
\begin{align*}
\hat\alpha \circ f &= \lbkt[K][\alpha]\circ \eta_{B,(K)}\circ f \\
&=\lbkt[K][\alpha]\circ \lbkt[K][f\tens K]\circ \eta_{A,(K)}\\
&=\lbkt[K][\alpha\circ (f\tens K)]\circ \eta_{A,(K)}\\
&=\lbkt[K][\alpha\circ (f\odot u)\circ i_{A\tens K}]\circ \eta_{A,(K)}\\
&=\lbkt[K][a\circ i_{A\tens K}]\circ \eta_{A,(K)}\\
&=\hat a.\\
\end{align*}
Now we have to prove that the north-west lower tirangle commutes, knowing that all other parts of the diagram commute. With a similar strategy as before, we prepend the arrows $\llbkt[u][g]\circ \lbkt[K][\alpha] \circ \eta_{B,(K)}$ and $\hat b$ with the pullback projections, and we consider the chain of equalities
\begin{align*}
p_{\lbkt[K][Y]} \circ \underline{\llbkt[u][g] \circ \lbkt[K][\alpha]}\circ \eta_{B,(K)} &= \lbkt[K][g] \circ \lbkt[K][\alpha] \circ \eta_{B,(K)}\\
&= \lbkt[K][y] \circ \eta_{B,(K)}\\
&= p_{\lbkt[K][Y]} \circ \hat b\\
p_{\lbkt[L][X]} \circ \llbkt[u][g]\circ \lbkt[K][\alpha] \circ \eta_{B,(K)} &= \underline{\lbkt[u][X] \circ \lbkt[K][\alpha]} \circ \eta_{B,(K)}\\
&= \lbkt[L][\alpha] \circ \underline{\lbkt[u][B\tens K] \circ \eta_{B,(K)}}\\
&= \underline{\lbkt[L][\alpha] \circ \lbkt[L][B\tens u]} \circ \eta_{B,(K)}\\
&= \lbkt[L][\alpha] \circ (f\odot u)^L \circ \underline{\lbkt[L][i_{B\tens L}]\circ \eta_{B,(K)}}\\
&= \lbkt[L][\alpha] \circ \lbkt[L][i_{B\tens L}]\circ \eta_{B,(K)}\\
&= p_{\lbkt[L][X]} \circ \hat b,
\end{align*}
motivated again by a mixture of naturality, dinaturality (e.g. of $\eta_{B,(K)}$), the definition of $\hat b$ above, and by the fact that $\alpha$ solves the lifting problem in \eqref{liftiproble}; the joint monicity of the pullback projections now entails that $\llbkt[u][g]\circ \lbkt[K][\alpha] \circ \eta_{B,(K)} = \hat b$, as desired. In a similar fashion it is possible to prove that $\tilde\alpha$ solves the appropriate lifting problem, and the converse implication.
\section{Applications}
Recall that a class of arrows $\clX\subseteq\bX^{[1]}$ in a category is \emph{saturated} if it is wide (i.e. it contains all isomorphisms and it is closed under composition), and closed under retracts, pushouts and transfinite composition. Every class of arrows $\clA$ \emph{generates} a saturated class $\clA^\text{s}$ as intersection of all saturated classes $\clX\supseteq \clA$. Dually we define a \emph{cosaturated} class $\clY$ to be wide, closed under retracts, pullbacks and transfinite op-composition. Every class of arrows $\clA$ generates, similarly, a cosaturated class $\prescript{\text{s}\!\!}{}{\clA}$ as intersection of all cosaturated classes $\clY\supseteq \clA$.
\begin{thm}
Let $\t$ be a \thc situation, and $\t^{[1]}$ the induced \thc following the notation of \autoref{main}. Let $\clA,\clS, \clB$ be respectively classes of objects in $\A,\S,\B$. 
\begin{itemize}
\item Let $(\clE_B,\clM_B)$ be a weak factorization system on $\B$. Then, if $\clA\odot \clS \subseteq \clE_B$, so does the class $\clA^\text{s}\odot \clS^\text{s}$ of $\odot$-products of the saturated closures of $\clA,\clS$.
\item Let $(\clE_A,\clM_A)$ be a weak factorization system on $\A$. Then, if $\llbkt[\clS][\clA] \subseteq \clM_A$, so does the class $\llbkt[\clS^\text{s}][\prescript{\text{s}\!}{}{\clB}]$.
\item Let $(\clE_S,\clM_S)$ be a weak factorization system on $\S$. Then, if $\rrbkt[\clA][\clB] \subseteq \clM_S$, so does the class $\rrbkt[\clA^\text{s}][\prescript{\text{s}\!}{}{\clB}]$. 
\end{itemize}
\end{thm}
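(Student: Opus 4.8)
The plan is to deduce all three items from a single mechanism applied three times: the transport of lifting problems recorded in the Remark following \autoref{main}, which gives, for every $f\in\A^{[1]}$, $u\in\S^{[1]}$, $g\in\B^{[1]}$, the correspondence $(f\odot u)\boxslash g \iff f\boxslash\llbkt[u][g] \iff u\boxslash\rrbkt[f][g]$; combined with the elementary fact that, for any class $\clK$, the left orthogonal complement ${}^{\boxslash}\clK$ is saturated and the right orthogonal complement $\clK^{\boxslash}$ is cosaturated. Since every weak factorization system satisfies $\clE = {}^{\boxslash}\clM$ and $\clM = \clE^{\boxslash}$, I will only ever use the lifting characterisation of its two classes; the factorisation axiom itself plays no role.

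I would treat the first item one variable at a time. Fix $u\in\clS$ and set $C_u \coloneqq \{f\in\A^{[1]} : f\odot u\in\clE_B\}$. Writing $\clE_B = {}^{\boxslash}\clM_B$ and applying the correspondence $(f\odot u)\boxslash m \iff f\boxslash\llbkt[u][m]$ for each $m\in\clM_B$ rewrites this as $C_u = {}^{\boxslash}\{\llbkt[u][m] : m\in\clM_B\}$, a left orthogonal complement, hence saturated. The hypothesis $\clA\odot\clS\subseteq\clE_B$ says precisely $\clA\subseteq C_u$ for every $u\in\clS$, so minimality of the saturated closure gives $\clA^{\text{s}}\subseteq C_u$, i.e. $\clA^{\text{s}}\odot\clS\subseteq\clE_B$. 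Running the same argument in the second variable---with $D_f \coloneqq \{u : f\odot u\in\clE_B\} = {}^{\boxslash}\{\rrbkt[f][m] : m\in\clM_B\}$, again saturated---then upgrades $\clS$ to $\clS^{\text{s}}$ and yields $\clA^{\text{s}}\odot\clS^{\text{s}}\subseteq\clE_B$.

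The remaining two items are the same computation read through the other two sides of the correspondence. For the second, $\clM_A = \clE_A^{\boxslash}$ together with $e\boxslash\llbkt[u][g] \iff (e\odot u)\boxslash g \iff u\boxslash\rrbkt[e][g]$ identifies $\{g : \llbkt[u][g]\in\clM_A\} = \{e\odot u : e\in\clE_A\}^{\boxslash}$ as cosaturated and $\{u : \llbkt[u][g]\in\clM_A\} = {}^{\boxslash}\{\rrbkt[e][g] : e\in\clE_A\}$ as saturated; fixing one variable at a time as before, each slot absorbing the closure matching its class (saturated for $\clS^{\text{s}}$, cosaturated for $\prescript{\text{s}}{}{\clB}$), propagates $\llbkt[\clS][\clB]\subseteq\clM_A$ to $\llbkt[\clS^{\text{s}}][\prescript{\text{s}}{}{\clB}]\subseteq\clM_A$. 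For the third, $\clM_S = \clE_S^{\boxslash}$ and $e\boxslash\rrbkt[f][g] \iff (f\odot e)\boxslash g \iff f\boxslash\llbkt[e][g]$ show $\{g : \rrbkt[f][g]\in\clM_S\} = \{f\odot e : e\in\clE_S\}^{\boxslash}$ cosaturated and $\{f : \rrbkt[f][g]\in\clM_S\} = {}^{\boxslash}\{\llbkt[e][g] : e\in\clE_S\}$ saturated, propagating $\rrbkt[\clA][\clB]\subseteq\clM_S$ to $\rrbkt[\clA^{\text{s}}][\prescript{\text{s}}{}{\clB}]\subseteq\clM_S$.

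I expect the only genuine obstacle to be bookkeeping rather than mathematics. Because $\llbkt$ and $\rrbkt$ are contravariant in one of their arguments, it is tempting to assign a closure type to each slot according to the naive variance of the Leibniz functors; but the correct assignment is dictated solely by whether, after transporting along the correspondence of \autoref{main}, the fixed-variable class appears as a \emph{left}-lifting or a \emph{right}-lifting condition. Once the correspondence is invoked in the direction that exposes each class as an orthogonal complement, saturation or cosaturation is automatic, and the ``one variable at a time'' scheme closes both arguments in each item with no further work.
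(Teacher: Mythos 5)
Correct, and essentially the paper's own argument: both you and the paper transport lifting problems along the bijections of \autoref{main} so that each class with one variable fixed is exhibited as an orthogonal complement ($\prescript{\boxslash}{}{\clK}$ saturated, $\clK^{\boxslash}$ cosaturated), then invoke minimality of the (co)saturated closure one variable at a time. The only difference is expository: the paper writes out just the first item and declares the rest ``similar,'' whereas you spell out the variance bookkeeping for $\llbkt$ and $\rrbkt$ in items two and three (and silently correct the statement's typo $\llbkt[\clS][\clA]$, which should read $\llbkt[\clS][\clB]$).
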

\begin{proof}
We prove that $\clA\odot \clS \subseteq \clE_B$ implies $\clA^\text{s}\odot \clS \subseteq \clE_B$; the proof that $\clA\odot \clS^\text{s} \subseteq \clE_B$ is similar and will follow from the same argument (of course, there are many cases of interest --- like when $\firstblank\tens\secondblank$ defines a symmetric monoidal structure on $\A=\S=\B$ --- in which the proof simplifies and this further step is not necessary). Recall that $\clE_B = \clM_B^\boxslash$, so that it is enough to prove that $(\clA\odot \clS) \boxslash \clM_B$ entails $(\clA^\text{s} \odot \clS) \boxslash \clM_B$. To prove this, note that it is sufficient to show that 
\[
\forall s\in\clS, \forall m\in\clM_B\quad (\clA\odot s) \boxslash \clM_B.
\]
Define $\clK \coloneqq \{ u \mid u\odot s\boxslash m \quad \forall s\in\clS,\forall m\in\clM_B \} $: our main theorem entails that
\[
\clK = \{u\mid u\boxslash \llbkt[s][m] \quad \forall s\in\clS,\forall m\in\clM_B\} = \prescript{\boxslash}{}{\llbkt[\clS][\clM_B]}
\]
which entails that $\clK$ is a saturated class, so that if $\clK \supseteq \clA$, it follows that $\clK \supseteq \clA^\text{s}$. Similarly, we have the chain of equalities
\begin{align}
\clH &\coloneqq \{s\mid u\odot s \boxslash m \quad \forall u\in\clA,\forall m\in \clM_B\}\notag\\
&=\{s\mid s \boxslash \rrbkt[u][m] \quad \forall u\in\clA,\forall m\in \clM_B\}\notag\\
&= \prescript{\boxslash}{}{\rrbkt[\clA][\clM]}
\end{align}
so that $\clH$ is a saturated class, and if $\clK \supseteq \clA$, it follows that $\clK \supseteq \clA^\text{s}$. It then follows that
\[\clA \odot \clS \subseteq \clM_B \Rightarrow \clA^\text{s}\odot\clS \subseteq \clM_B \Rightarrow \clA^\text{s}\odot\clS^\text{s} \subseteq \clM_B.\qedhere\]
\end{proof}

\paragraph{\bf Acknowledgements.} This document was partially written during the author's stay at MPIM.

\bibliography{allofthem}{}
\bibliographystyle{amsalpha}
\hrulefill 
\end{document}